\newtheorem{assumption}{Assumption}
\newtheorem{property}{Property}
\newtheorem{lemma}{Lemma}
\newtheorem{theorem}{Theorem}
\newtheorem{proposition}{Proposition}
\newtheorem{remark}{Remark}
\newcommand{\st}{\mbox{s.t.}}
\newcommand{\be}{\begin{equation}}
\newcommand{\ee}{\end{equation}}
\newcommand{\bea}{\begin{eqnarray}}
\newcommand{\eea}{\end{eqnarray}}
\newcommand{\bvec}{\left(\begin{array}{c}}
\newcommand{\evec}{\end{array}\right)}
\newcommand{\bsub}{\begin{subequations}}
\newcommand{\esub}{\end{subequations}}
\newcommand{\cm}{\color{black}}
\newcommand{\cmred}{\color{black}}
\begin{document}

\title{\LARGE\bf A Parallel Decomposition Scheme for Solving \\ Long-Horizon Optimal Control Problems}

\author{Sungho Shin$^{1}$, Timm Faulwasser$^{2}$, Mario Zanon$^{3}$, and Victor M. Zavala$^{1}$
  \thanks{$^{1}$S. Shin and V. M. Zavala are with the Department of Chemical and Biological Engineering, University of Wisconsin-Madison, Madison, WI 53706 USA (e-mail: sungho.shin@wisc.edu; victor.zavala@wisc.edu).}
  \thanks{$^{2}$Timm Faulwasser is with Institute for Automation and Applied Informatics, Karlsruhe Institute of Technology, 76021 Karlsruhe, Germany. (e-mail: timm.faulwasser@ieee.org)}
  \thanks{$^{3}$Mario Zanon is with IMT Lucca, 55100 Lucca, Italy.(e-mail: mario.zanon@imtlucca.it)}
  \thanks{\cmred{TF acknowledges support by the Bundesministerium f\"u{}r Bildung und Forschung (BMBF), Grant 05M18CKA.}}}

\maketitle

\begin{abstract}
  We present a temporal decomposition scheme for solving long-horizon optimal control problems. In the proposed scheme, the time domain is decomposed into a set of subdomains with partially overlapping regions. Subproblems associated with the subdomains are solved in parallel to obtain local primal-dual trajectories that are assembled to obtain the global trajectories.
  We provide a sufficient condition that guarantees convergence of the proposed scheme. This condition states that the effect of perturbations on the boundary conditions (i.e., the initial state and terminal dual/adjoint variable) should decay asymptotically as one moves away from the boundaries. 
  This condition also reveals that the scheme converges if the size of the overlap is sufficiently large and that the convergence rate improves with the size of the overlap. We prove that linear quadratic problems satisfy the asymptotic decay condition, and we discuss numerical strategies to determine if the condition holds in more general cases. We draw upon a non-convex optimal control problem to illustrate the performance of the proposed scheme.
\end{abstract}

\section{Introduction}\label{sec:intro}

Long-horizon optimal control problems (OCPs) arise in model predictive control (MPC) applications such as chemical process systems \cite{baldea2007control}, autonomous vehicle steering \cite{falcone2007predictive}, and battery systems \cite{kumar2018hierarchical}. They also appear in other application domains such as chemical production planning \cite{jackson2003temporal} and electricity production planning \cite{barrows2014time}. Different decomposition techniques have been reported in the literature to improve computational tractability of these problems including dual decomposition \cite{giselsson2013accelerated}, alternating direction method of multipliers \cite{boyd2011distributed}, dual dynamic programming \cite{geoffrion1972generalized}, Gauss-Seidel schemes \cite{zavala2016new,shin2018multi}, and parallel Newton schemes \cite{deng2018parallel}. {\cm Such decomposition techniques allow scalable solutions of long-horizon OCPs by the use of parallel computers.}

In this work, we study the convergence properties of a new {\em decomposition} paradigm that uses overlapping time domains. This approach is motivated by overlapping Schwartz schemes used for the solution of partial differential equations (PDEs) \cite{mathew2008domain,dryja1987additive}. In the proposed scheme, the time domain is decomposed into a set of partially overlapping subdomains. Subproblems associated with subdomains are solved in parallel to obtain local primal-dual trajectories by using current primal-dual information from the neighboring subdomains. The subdomain trajectories are then assembled (this can be interpreted as a projection operator) to update the primal-dual trajectories over the entire domain, and the procedure is repeated.  We provide a sufficient condition that guarantees convergence of the proposed decomposition algorithm when applied to OCPs. This condition applies to linear and nonlinear problems. Specifically, the condition indicates that convergence of the decomposition scheme is guaranteed provided that the sensitivity of the primal-dual (state-adjoint) trajectories to perturbations in initial states and terminal cost gradients decay asymptotically as one moves away from the boundaries. We call this condition {\em asymptotic decay of sensitivity} (ADS). The condition also reveals that the algorithm converges provided that the size of the overlap is sufficiently large and that the convergence rate improves as the size of the overlapping regions increases. 

{ADS-like properties have been recently explored in the literature. Xu et. al. recently showed that an ADS condition (in the primal space) holds under a time-varying and inequality-constrained linear quadratic (LQ) control setting \cite{xu2018exponentially,xu2017exponentially}. To prove this, the authors assumed uniformly complete controllability and exploited the algebraic structure of the Riccati equation. The authors used the primal ADS property to show that trajectories of an overlapping temporal decomposition scheme approximate those of the long-horizon problem and that the approximation error decays as the size of the overlap increases. The authors also showed that receding horizon control provides approximate trajectories and that the error converges as the size of the overlapping regions increase. These works do not provide an algorithmic scheme that delivers optimal trajectories (for a given size of the overlapping region), as we do in this work. Shin et. al. recently established a primal ADS condition for general graph-structured, unconstrained quadratic programs and showed that this condition guarantees the convergence of an iterative overlapping decomposition algorithm \cite{shin2018decentralized}.   The condition established in this work is specialized to constrained OCPs and operates in the primal-dual space. A primal ADS property has also been established in \cite[Lemma 5]{kit:faulwasser18e_2}. This condition is exploited by the authors to establish the stability of economic MPC.}

The paper is organized as follows. In Section \ref{sec:setting}, we present the basic setting and describe the proposed decomposition scheme. In Section \ref{sec:main}, we propose a primal-dual ADS condition that guarantees convergence of the algorithm. In Section \ref{sec:LQ}, we show that ADS holds for a simplified LQ setting. In Section \ref{sec:cstudy}, we demonstrate the proposed scheme using a nonlinear economic MPC problem. 

%%%%%%%%%%%%%%%%%%%%%%%%%%%%%%%%%%%%%%%%%%
\section{Basic Definitions and Setting}\label{sec:setting}

We consider an OCP with a time domain set $\mathbb{I}_{M:N}$, an initial state $x^*_M\in\mathbb{R}^{n_x}$, and a terminal cost gradient $\lambda^*_N\in\mathbb{R}^{n_x}$ of the form:
\begin{subequations}
  \label{eqn:ocp}
  \begin{align}
    \label{eqn:ocp-obj}
    \min_{\substack{x_{M:N}\\u_{M:N-1}}}\;& \sum_{i=M}^{N-1} \ell(x_i,u_i)+ (\lambda^*_N)^\top x_{N}\\
    \label{eqn:ocp-init}
    \st \;&x_{M} = x^*_M \quad(\lambda_{M})\\
    \label{eqn:ocp-dyn}
    & x_{i} = f(x_{i-1},u_{i-1}) \quad(\lambda_{i})\quad \forall i\in\mathbb{I}_{M+1:N}\\
    \label{eqn:ocp-ineq}
    &g(x_i,u_i)\leq 0\quad (\mu_{i}),\quad \forall i\in\mathbb{I}_{M:N-1}
  \end{align}
\end{subequations}
We denote this problem as $\mathcal{P}_{M:N}(x^*_M,\lambda^*_N)$.  Here, $x_i\in\mathbb{R}^{n_x}$ and $u_i\in\mathbb{R}^{n_u}$ are the state and input variables at time $i$; $\lambda_{i}\in\mathbb{R}^{n_x}$ and $\mu_{i}\in\mathbb{R}^{n_g}$ are the dual variables associated with \eqref{eqn:ocp-init}-\eqref{eqn:ocp-dyn} and \eqref{eqn:ocp-ineq}, respectively. Symbol $f:\mathbb{R}^{n_x}\times\mathbb{R}^{n_u}\rightarrow\mathbb{R}^{n_x}$ is the dynamic mapping, $\ell:\mathbb{R}^{n_x}\times\mathbb{R}^{n_u}\rightarrow\mathbb{R}$ is the stage cost mapping, and $g:\mathbb{R}^{n_x}\times\mathbb{R}^{n_u}\rightarrow\mathbb{R}^{n_g}$ is the constraint mapping. We denote the set of real numbers and the set of integers as $\mathbb{R}$ and $\mathbb{Z}$, respectively, and we define $\mathbb{I}_{M:N}:=\mathbb{Z}\cap [M,N]$. We use the syntax $(x_1,x_2,\cdots,x_n) := \begin{bmatrix}x_1^\top & x_2^\top &\cdots &x_n^\top\end{bmatrix}^\top$ and $x_{M:N} := (x_{M},x_{M+1},\cdots,x_{N})$. We denote primal-dual pairs as $z_i:=(x_i,\lambda_i)$.

\begin{remark}\label{rmk:terminal}
  From the KKT conditions of \eqref{eqn:ocp}, it follows that $\lambda_N=\lambda^*_N$. Therefore, incorporating the terminal penalty term $(\lambda^*_N)^\top x_N$ in the objective function as \eqref{eqn:ocp-obj} essentially constitute the terminal constraint of the dual variable $\lambda$ at $i =N$. 
\end{remark}

We now describe the proposed decomposition scheme for solving $\mathcal{P}_{M:N}(x_M^*,\lambda_N^*)$. We partition the time domain  $\mathbb{I}_{M:N}$ into a collection of $K$ non-overlapping (i.e., disjoint) sets of the form $\mathbb{I}_{M_1:N_1},\cdots,\mathbb{I}_{M_K:N_K}$. We also define a collection of overlapping sets $\mathbb{I}_{M^\omega_1:N^\omega_1},\cdots,\mathbb{I}_{M^\omega_K:N^\omega_K}$ satisfying
\begin{align*}
  M_k^\omega = \max(M_k-\omega,M),\; N_k^\omega = \min(N_k+\omega,N)
\end{align*}
for any $k\in\mathbb{I}_{1:K}$. We call $\omega\in\mathbb{Z}_{>0}$ the {\em size of the overlap}.  The non-overlapping and overlapping time domains with $\omega=1$ are illustrated in Figure \ref{fig:cartoon}.

%% In the final submission, the size can be reduced by .48\textwidth.
\begin{figure*}[t!]
\begin{center}
  \includegraphics[width=.8\textwidth]{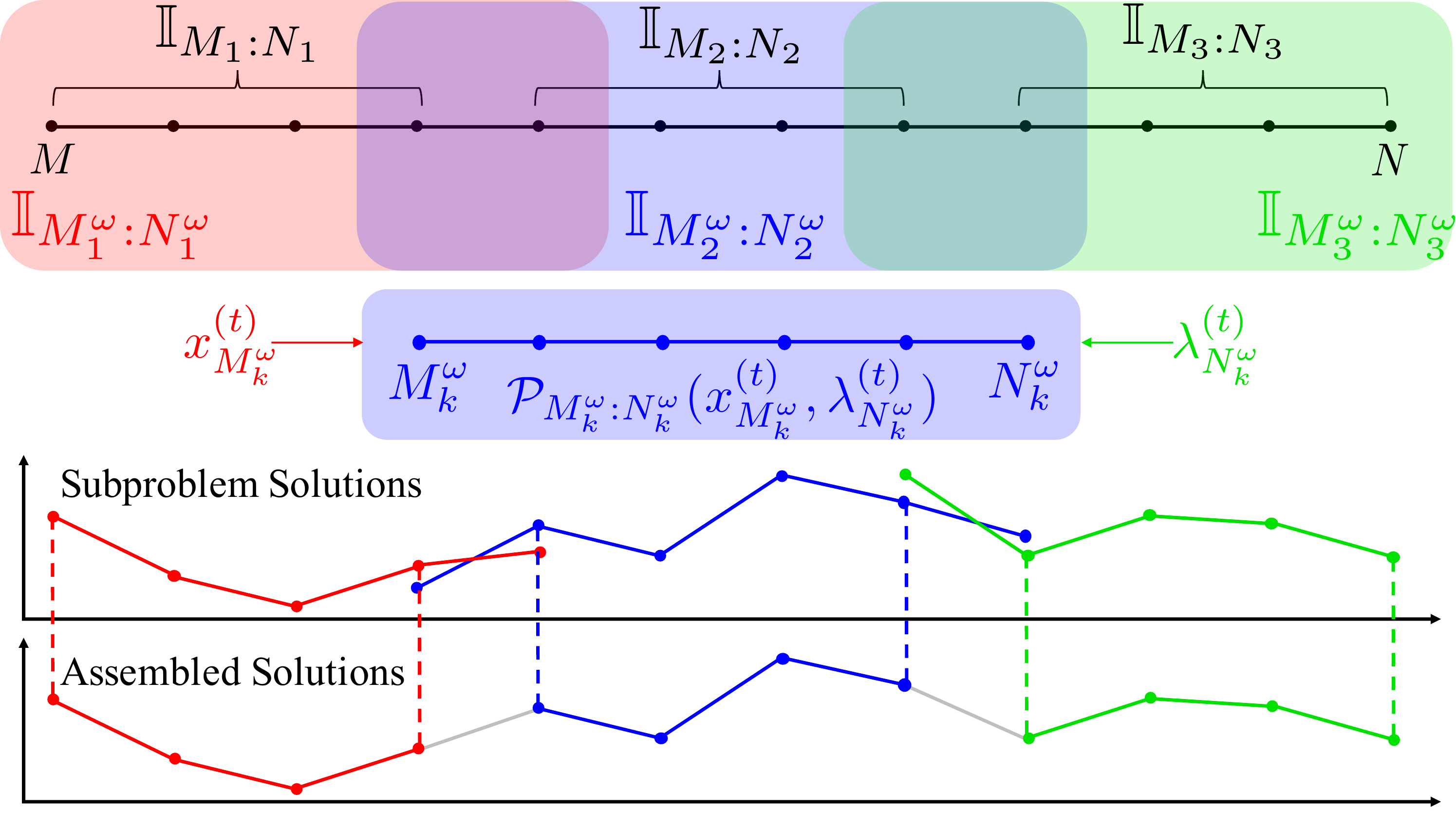}
  \caption{Sketch of parallel decomposition algorithm.}\label{fig:cartoon}
  \end{center}
\end{figure*}

The scheme starts with an initial guess of the primal $x^{(0)}_{M:N}$ and dual $\lambda^{(0)}_{M:N}$ trajectories with $x^{(0)}_M=x_M^*$ and $\lambda^{(0)}_N=\lambda_N^*$. For each subdomain index $k\in\mathbb{I}_{1:K}$ and iteration counter $t\in\mathbb{Z}_{>0}$, subproblems $\mathcal{P}_{M^\omega_k:N^\omega_k}(x^{(t)}_{M_k^\omega},\lambda^{(t)}_{N_k^\omega})$ are solved in parallel. Each subproblem requires current primal-dual information from the neighboring subproblems (this information enters in the initial condition and terminal penalty).
The solution of subproblem $\mathcal{P}_{M^\omega_k:N^\omega_k}(x^{(t)}_{M_k^\omega},\lambda^{(t)}_{N_k^\omega})$ yields the local primal-dual trajectories $\hat{z}^{(t+1,k)}_{M^\omega_k:N^\omega_k}$. These trajectories are restricted to the non-overlapping subdomains $\mathbb{I}_{M_k:N_k}$ by using the restriction $z^{(t+1)}_{M_k:N_k} := \hat{z}^{(t+1,k)}_{M_k:N_k}$. The solutions at $\mathbb{I}_{M^\omega_k:N^\omega_k}\setminus \mathbb{I}_{M_k:N_k}$ are discarded. This restriction procedure over $k\in\mathbb{I}_{1:K}$ can be seen as a projection that assembles the entire primal-dual trajectory $z^{(t+1)}_{M:N}$ that is in turn used as the next guess in the algorithm.  

The trajectory assembling procedure is sketched in Figure \ref{fig:cartoon}. The proposed decomposition scheme is summarized in Algorithm \ref{alg:main} and can be stated compactly as follows:
\begin{align}\label{eqn:alg}
  z^{(t+1)}_{M_k:N_k} \leftarrow \mathcal{P}_{M^\omega_k:N^\omega_k}(x^{(t)}_{M^\omega_k},\lambda^{(t)}_{N^\omega_k}),\; k\in\mathbb{I}_{1:K},\; t\in \mathbb{Z}_{>0}.
\end{align}
\begin{algorithm}[H]\caption{Decomposition Scheme with Overlap}\label{alg:main}
  \begin{algorithmic}
    \STATE Initialize $z^{(0)}_{M:N}$ and $t\leftarrow 0$
    \WHILE {termination criteria unsatisfied}
    \FOR {(in parallel) $k=1$ to $K$}
    \STATE Solve $\mathcal{P}_{M^\omega_k:N^\omega_k}(x^{(t)}_{M_k^\omega},\lambda^{(t)}_{N_k^\omega})$ to obtain $\hat{z}^{(t+1,k)}_{M^\omega_k:N^\omega_k}$
    \STATE Apply restriction $z^{(t+1)}_{M_k:N_k} := \hat{z}^{(t+1,k)}_{M_k:N_k}$
    \ENDFOR 
    \STATE Evaluate termination criteria
    \STATE $t\leftarrow t+1$
    \ENDWHILE
  \end{algorithmic} 
\end{algorithm}

\section{Convergence Results}\label{sec:main}

We now provide a sufficient condition for \eqref{eqn:ocp} that guarantees convergence of the decomposition algorithm \eqref{eqn:alg}.  We begin by making the following existence and uniqueness assumption on solutions of problem \eqref{eqn:ocp}.

\begin{assumption}[Existence and uniqueness of solution]\label{ass:uniqsol}
  There exist sets $X$ and $\Lambda \subseteq \mathbb{R}^{n_x}$ such that, for any $M,N\in\mathbb{Z}_{>0}$ with $M< N$ and $(x^*_M,\lambda^*_N)\in X\times\Lambda$, there exists a unique primal-dual solution $z^*_{M:N}$ of $\mathcal{P}_{M:N}(x^*_M,\lambda^*_N)$ with $z_i\in X\times\Lambda$ and for any $i\in\mathbb{I}_{M:N}$.
\end{assumption}
\begin{remark}
The solution of $\mathcal{P}_{M:N}(x^*_M,\lambda^*_N)$ always satisfies $x_M=x^*_M$ (by \eqref{eqn:ocp-init}) and $\lambda_N=\lambda^*_N$ (see Remark \ref{rmk:terminal}), and thus we can write the solution of $\mathcal{P}_{M:N}(x^*_M,\lambda^*_N)$ as $z^*_{M:N}=(x^*_{M:N},\lambda^*_{M:N})$. Because the scheme \eqref{eqn:alg} is mostly concerned with the state and the adjoints (this is the information exchanged between domains), we will not explicitly indicate the controls $u^*_{M:N-1}$ and multipliers $\mu^*_{M:N-1}$ in the nomenclature.
\end{remark}
 We now state the following principle of optimality result:
\begin{lemma}\label{lem:fixed}
  Assume that problem \eqref{eqn:ocp} satisfies Assumption \ref{ass:uniqsol} and consider $P,M,N,L\in\mathbb{Z}_{>0}$ with $P\leq M< N\leq L$ and $(x^*_P,\lambda^*_L)\in X\times\Lambda$. Let $z^*_{P:L}$ be the solution of $\mathcal{P}_{P:L}(x^*_P,\lambda^*_L)$, then $z^*_{M:N}$ is the solution of $\mathcal{P}_{M:N}(x^*_M,\lambda^*_N)$. 
\end{lemma}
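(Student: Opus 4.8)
The plan is to characterize both problems through their first-order (Karush--Kuhn--Tucker, KKT) systems and to show that the \emph{restriction} of the global primal-dual trajectory $z^*_{P:L}$ (together with its controls $u^*$ and inequality multipliers $\mu^*$) to the indices $\mathbb{I}_{M:N}$ satisfies the KKT system of the subproblem $\mathcal{P}_{M:N}(x^*_M,\lambda^*_N)$. Since Assumption \ref{ass:uniqsol} guarantees a \emph{unique} primal-dual (KKT) point of that subproblem, the verified restriction must coincide with its solution, which is exactly the claim.

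First I would write the KKT conditions of $\mathcal{P}_{P:L}(x^*_P,\lambda^*_L)$: primal feasibility (the initial condition, the dynamics \eqref{eqn:ocp-dyn}, and the inequalities \eqref{eqn:ocp-ineq}); costate stationarity in each $x_i$, which for an interior index is a three-term relation coupling $\lambda_i$, $\lambda_{i+1}$, $\mu_i$ and the gradients of $\ell$, $f$, $g$ evaluated at $(x_i,u_i)$; stationarity in each $u_i$; dual feasibility $\mu_i\ge 0$; and complementarity. The key structural observation is that every such condition indexed by $i\in\mathbb{I}_{M:N}$ is \emph{local}, involving only data at $i-1$, $i$, and $i+1$. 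Consequently, the dynamics and inequalities on $\mathbb{I}_{M+1:N}$ and $\mathbb{I}_{M:N-1}$, the costate stationarity on $\mathbb{I}_{M+1:N-1}$, the input stationarity on $\mathbb{I}_{M:N-1}$, dual feasibility, and complementarity of the subproblem are \emph{literally} a subset of the corresponding equations of $\mathcal{P}_{P:L}$ and are therefore inherited by the restriction.

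The delicate part lies at the two boundaries, where the subproblem replaces some interior equations by boundary ones. At the left boundary the multiplier $\lambda_M$ changes role --- it is the multiplier of the dynamic relation $x_M=f(x_{M-1},u_{M-1})$ in $\mathcal{P}_{P:L}$ but of the initial constraint $x_M=x^*_M$ in $\mathcal{P}_{M:N}$; however, in both problems this multiplier contributes the identical term to $\partial\mathcal{L}/\partial x_M$, so the $x_M$-stationarity relation has exactly the same form in the two problems and is satisfied by $\lambda^*_M$, while the subproblem's initial condition $x_M=x^*_M$ holds by the choice of the boundary data. At the right boundary, the subproblem's $x_N$-stationarity collapses, as in Remark \ref{rmk:terminal}, to $\lambda_N=\lambda^*_N$, which likewise holds by construction of the boundary data. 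This exhausts the KKT system of the subproblem.

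The hard part will be precisely this boundary bookkeeping: one must verify that reinterpreting the dynamic multiplier at $M$ as an initial-condition multiplier neither creates nor destroys a stationarity relation, and that the terminal penalty $(\lambda^*_N)^\top x_N$ is exactly what renders the costate boundary condition $\lambda_N=\lambda^*_N$ mutually consistent between the two problems; the interior transfer is routine by locality. Once the restriction is shown to satisfy the full subproblem KKT system, uniqueness from Assumption \ref{ass:uniqsol} immediately identifies it as $z^*_{M:N}$, completing the argument.
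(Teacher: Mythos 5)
Your proposal is correct and follows essentially the same route as the paper: write the KKT system of the subproblem, observe that the restriction of the global primal-dual trajectory satisfies it (with the initial-condition multiplier and the terminal penalty $(\lambda^*_N)^\top x_N$ making the two boundary stationarity conditions consistent, exactly as in Remark \ref{rmk:terminal}), and then invoke the uniqueness in Assumption \ref{ass:uniqsol} to identify the restriction with the subproblem's solution. Your boundary bookkeeping is in fact spelled out more explicitly than in the paper, which compresses it into the single observation that the consistency ``is the result of using the duals as a terminal penalty.''
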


\begin{proof}
  %% From the existence and the uniqueness of the solution, the following statement holds: $x^*_{M:N}$, $u^*_{M:N-1}$, $\lambda^*_{M:N}$, $\mu^*_{M:N-1}$ are the primal and dual solutions of $\mathcal{P}_{M:N}(x^*_M,\lambda^*_N)$ if and only if the KKT conditions associated with $\mathcal{P}_{M:N}(x^*_M,\lambda^*_N)$ holds with $x^*_{M:N}$, $u^*_{M:N-1}$, $\lambda^*_{M:N}$, $\mu^*_{M:N-1}$.
  We define the Lagrangian $\mathcal{L}_{M:N}(\cdot;\cdot)$ of $\mathcal{P}_{M:N}(x^*_M,\lambda^*_N)$ as:
  \begin{align}\label{eqn:lag}
      &\mathcal{L}_{M:N}(x_{M:N},u_{M:N-1},\lambda_{M:N},\mu_{M:N-1};x^*_M,\lambda^*_N) \\
      &:=\sum_{i=M}^{N-1} \ell(x_i,u_i) + \lambda_M^\top(x_M-x_M^*) + (\lambda^*_N)^\top x_N\nonumber\\
      &+ \sum_{i=M+1}^{N} \lambda_{i}^\top (x_{i}-f(x_{i-1},u_{i-1}))+ \sum_{i=M}^{N-1} \mu^\top_i g(x_i,u_i)\nonumber
  \end{align}
  From Assumption \ref{ass:uniqsol}, the solution of $\mathcal{P}_{1:L}(x^*_1,\lambda^*_L)$ exists and is unique and this implies that the KKT conditions hold at $x^*_{M:N}$, $u^*_{M:N-1}$, $\lambda^*_{M:N}$, $\mu^*_{M:N-1}$; that is,
  \begin{subequations}\label{eqn:kkt}
    \begin{align}
      \label{eqn:kkt-duf-x}\lambda^*_i +  \left(\nabla_{x}f^*_i\right)^\top \lambda^*_{i+1} +\nabla_x \ell^*_i + \left(\nabla_{x}g^*_i\right)^\top \mu^*_{i}&= 0  \\
      \label{eqn:kkt-duf-u}\left(\nabla_{u}f^*_i\right)^\top \lambda^*_{i+1} + \nabla_{u} \ell^*_i
      +  \left(\nabla_{u}g^*_i\right)^\top \mu^*_{i}&= 0\\
      \label{eqn:kkt-prf}x^*_{i+1} &= f^*_i\\
      \label{eqn:kkt-slk}g^*_i\leq 0,\quad \mu^*_i \geq 0,\quad \mathop{\textrm{diag}}(\mu^*_i) g^*_i &= 0
  \end{align}
  \end{subequations}
  holds for any $i\in\mathbb{I}_{M:N-1}$, where $ f^*_i:= f(x^*_i,u^*_i)$, $\nabla f^*_i:=\nabla f(x^*_i,u^*_i)$, $\nabla g^*_i:=\nabla g(x^*_i,u^*_i)$, $\nabla \ell^*_i:=\nabla \ell(x^*_i,u^*_i)$.
  
  From \eqref{eqn:kkt}, we observe that the KKT conditions of $\mathcal{P}_{P:L}(x^*_1,\lambda^*_L)$ with $(x^*_{P:L},u^*_{P:L-1},\lambda^*_{P:L},\mu^*_{P:L-1})$ imply that the KKT conditions of $\mathcal{P}_{M:N}(x^*_M,\lambda^*_N)$ hold with $(x^*_{M:N},u^*_{M:N-1},\lambda^*_{M:N},\mu^*_{M:N-1})$. This consistency is the result of using the duals as a terminal penalty in \eqref{eqn:ocp}. 
  
By Assumption \ref{ass:uniqsol}, we have that $(x^*_M,\lambda^*_N)\in X\times \Lambda$. This implies that the solution of $\mathcal{P}_{M:N}(x^*_M,\lambda^*_N)$ exists and is unique, and thus the KKT point is the unique solution. Thus, $x^*_{M:N-1}$, $u^*_{M:N-1}$, $\lambda^*_{M:N}$, and $\mu^*_{M:N-1}$ form the solutions of $\mathcal{P}_{M:N}(x^*_M,\lambda^*_N)$.
\end{proof}

Lemma \ref{lem:fixed} justifies the structure of \eqref{eqn:ocp}; specifically, primal and dual information of the neighboring subproblems should be incorporated as initial states and terminal penalties, respectively. Furthermore, it implies that the primal-dual trajectory of the subproblems can be assembled to obtain the optimal primal-dual trajectory of the entire problem if the boundary conditions are set to the optimal values. {From this we also see that a receding-horizon control scheme delivers the solution of the long-horizon problem if the terminal cost gradients are obtained from the dual solution of the long-horizon problem. Recently the effect of terminal cost gradient on the solution trajectory of optimal control problems and its connection with dual variables has been investigated in [17], [18]. In particular, it is shown that economic MPC can achieve asymptotic stability without terminal constraints by incorporating the dual of the underlying steady-state problem as a terminal cost gradient. Furthermore, it is shown that LQ economic MPC can be stabilized by adaptively tuning the terminal cost gradient. Such observations align with our observations of Lemma \ref{lem:fixed}. Specifically, the quality of the solutions of OCPs (which in general improves with the horizon length) can also be improved by choosing a suitable terminal cost gradient.}

We now state our sufficiency condition for convergence, that we call asymptotic decay of sensitivity (ADS). 

\begin{property}[Asymptotic decay of sensitivity]\label{property-weak}
  Assume that \eqref{eqn:ocp} satisfies Assumption \ref{ass:uniqsol}. For given $M$, $N\in\mathbb{Z}_{>0}$ with $M< N$ and $(x^*_M,\lambda^*_N)$, $(\tilde{x}_M,\tilde{\lambda}_N)\in X\times\Lambda$, let $z^*_{M:N}$ and $\tilde{z}_{M:N}$ be the solutions of $\mathcal{P}_{M:N}(x^*_M,\lambda^*_N)$ and $\mathcal{P}_{M:N}(\tilde{x}_M,\tilde{\lambda}_N)$, respectively. There exist $\{\varepsilon_i\}_{i\in\mathbb{Z}_{\geq 0}}$  with $\varepsilon_i \rightarrow 0$ as $i\rightarrow\infty$
  such that:
  \begin{align*}
    \begin{aligned}
    &\Vert z^*_i-\tilde{z}_i\Vert_\infty\leq \varepsilon_{i-M} \Vert x_M^*-\tilde{x}_M\Vert_\infty + \varepsilon_{N-i}\Vert\lambda^*_N - \tilde{\lambda}_N\Vert_\infty
    \end{aligned}
  \end{align*}
 holds for any $i\in\mathbb{I}_{M:N}$.
\end{property}
Property \ref{property-weak} implies that the solution $z_i^*=(x^*_i,\lambda^*_i)$ of $\mathcal{P}_{M:N}(x^*_M,\lambda^*_N)$ at time $i$ becomes less sensitive to perturbations in the initial state $x^*_M$ and the terminal cost gradient $\lambda^*_N$ as the time index $i$ moves away from the boundary.

\begin{remark}%% Can be removed in the final submission.
    It is important that the sequence $\{\varepsilon_i\}_{i\in\mathbb{Z}_{>0}}$ is a {\em uniform parameter} that does not depend on $M$ and $N$ and that does not depend on the choice of the boundary conditions (i.e., $x^*_M$ and $\lambda^*_N$). This enables a uniform bound that holds for different subproblems.
\end{remark}
{\begin{remark}[Relation with turnpike properties]
  Property 1 is related but not equivalent to so-called turnpike properties of OCPs (see \cite{epfl:faulwasser15h,faulwasser2018economic}). Property 1 establishes a relationship between two solution trajectories, while a classical turnpike property compares a single solution trajectory with the steady-state solution.  In addition, Property 1 requires asymptotic convergence of the difference between the solutions, while the steady-state turnpike requires a bound on the number of time indexes with $\Vert x_i-x_{\text{s}}\Vert>\epsilon_{\text{turnpike}}$. An in-depth investigation of the relation between Property 1 and time-varying turnpikes is subject to future work.
\end{remark}}

We now prove that the ADS property provides a sufficient condition guaranteeing convergence of the proposed decomposition scheme. 

\begin{theorem}[Convergence]\label{thm:convergence}
  Assume that \eqref{eqn:ocp} satisfies Assumption \ref{ass:uniqsol} and that Property \ref{property-weak} holds. 
  Furthermore, consider scheme \eqref{eqn:alg} with overlap $\omega$  and  let $z^*_{M:N}$ be the solution of $\mathcal{P}_{M:N}(x^*_M,\lambda^*_N)$. We have that:
  \begin{align}\label{eqn:thm-convergence}
    \Vert z^{(t)}_{M:N}-z^*_{M:N}\Vert_\infty \leq \left(2 \varepsilon_\omega \right)^t \Vert z^{(0)}_{M:N}-z^*_{M:N}\Vert_\infty
  \end{align}
\end{theorem}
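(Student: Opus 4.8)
The plan is to reduce everything to a single one-step contraction, namely $\Vert z^{(t+1)}_{M:N}-z^*_{M:N}\Vert_\infty \leq 2\varepsilon_\omega \Vert z^{(t)}_{M:N}-z^*_{M:N}\Vert_\infty$, from which \eqref{eqn:thm-convergence} follows by an immediate induction on $t$ starting from the base iterate $z^{(0)}_{M:N}$. The central device is to view, on each subdomain, the global solution and the locally computed subproblem solution as two solutions of the \emph{same} optimal control problem that differ only in their boundary data, so that Property~\ref{property-weak} applies verbatim. As a harmless preliminary I would replace $\{\varepsilon_i\}$ by its non-increasing envelope $\sup_{j\geq i}\varepsilon_j$; this is still a null sequence and still validates Property~\ref{property-weak} (the bound only grows), and monotonicity is exactly what later lets $\varepsilon_\omega$ dominate every sensitivity coefficient evaluated at a distance of at least $\omega$ from a boundary.

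First, fix an iteration $t$ and a subdomain index $k$. Applying Lemma~\ref{lem:fixed} with $P=M$, $L=N$ to the window $[M_k^\omega,N_k^\omega]\subseteq[M,N]$ shows that the restriction $z^*_{M_k^\omega:N_k^\omega}$ of the global solution is exactly the solution of $\mathcal{P}_{M_k^\omega:N_k^\omega}(x^*_{M_k^\omega},\lambda^*_{N_k^\omega})$. The subproblem actually solved in \eqref{eqn:alg} is $\mathcal{P}_{M_k^\omega:N_k^\omega}(x^{(t)}_{M_k^\omega},\lambda^{(t)}_{N_k^\omega})$ with solution $\hat{z}^{(t+1,k)}$. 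These are two instances of one and the same problem with boundary data $(x^*_{M_k^\omega},\lambda^*_{N_k^\omega})$ and $(x^{(t)}_{M_k^\omega},\lambda^{(t)}_{N_k^\omega})$, both in $X\times\Lambda$ (the former by Assumption~\ref{ass:uniqsol}; the latter because every component of every iterate is itself a subproblem solution, hence in $X\times\Lambda$, the initial guess being chosen likewise). Property~\ref{property-weak} then yields, for every $i\in\mathbb{I}_{M_k^\omega:N_k^\omega}$,
\[
\Vert \hat{z}^{(t+1,k)}_i-z^*_i\Vert_\infty \leq \varepsilon_{i-M_k^\omega}\Vert x^{(t)}_{M_k^\omega}-x^*_{M_k^\omega}\Vert_\infty + \varepsilon_{N_k^\omega-i}\Vert \lambda^{(t)}_{N_k^\omega}-\lambda^*_{N_k^\omega}\Vert_\infty .
\]

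The key geometric step restricts attention to $i$ in the non-overlapping core $\mathbb{I}_{M_k:N_k}$, since the restriction $z^{(t+1)}_{M_k:N_k}:=\hat{z}^{(t+1,k)}_{M_k:N_k}$ keeps only those indices. I would argue that each of the two terms above is at most $\varepsilon_\omega\Vert z^{(t)}_{M:N}-z^*_{M:N}\Vert_\infty$. For the left term there are two cases: if $M_k^\omega=M_k-\omega$ then $i-M_k^\omega\geq\omega$, and monotonicity gives $\varepsilon_{i-M_k^\omega}\leq\varepsilon_\omega$; if instead $M_k^\omega=M$ (the overlap reaches the global left boundary), then $x^{(t)}_{M}=x^*_M$ at every iteration — the global initial state is fixed by \eqref{eqn:ocp-init} and reproduced in each subproblem — so the perturbation $\Vert x^{(t)}_{M_k^\omega}-x^*_{M_k^\omega}\Vert_\infty$ vanishes and the term is zero. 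The right term is handled symmetrically using $\lambda^{(t)}_N=\lambda^*_N$. In either branch, bounding the surviving boundary perturbation by the full error, $\Vert x^{(t)}_{M_k^\omega}-x^*_{M_k^\omega}\Vert_\infty,\ \Vert \lambda^{(t)}_{N_k^\omega}-\lambda^*_{N_k^\omega}\Vert_\infty \leq \Vert z^{(t)}_{M:N}-z^*_{M:N}\Vert_\infty$, gives $\Vert \hat{z}^{(t+1,k)}_i-z^*_i\Vert_\infty\leq 2\varepsilon_\omega\Vert z^{(t)}_{M:N}-z^*_{M:N}\Vert_\infty$ for every $i\in\mathbb{I}_{M_k:N_k}$.

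Finally, since the cores $\mathbb{I}_{M_k:N_k}$ partition $\mathbb{I}_{M:N}$ and $z^{(t+1)}$ coincides with $\hat{z}^{(t+1,k)}$ on each of them, taking the maximum over all $i$ and $k$ upgrades the pointwise estimate to the one-step contraction $\Vert z^{(t+1)}_{M:N}-z^*_{M:N}\Vert_\infty\leq 2\varepsilon_\omega\Vert z^{(t)}_{M:N}-z^*_{M:N}\Vert_\infty$, and a one-line induction delivers \eqref{eqn:thm-convergence}. I expect the main obstacle to be the careful bookkeeping at the boundary subdomains $k=1$ and $k=K$, where the distance to the global boundary is strictly less than $\omega$ yet the sensitivity coefficient there is multiplied by a perturbation that is identically zero; making this dichotomy airtight, together with the monotonization of $\{\varepsilon_i\}$ that legitimizes replacing $\varepsilon_{i-M_k^\omega}$ and $\varepsilon_{N_k^\omega-i}$ by $\varepsilon_\omega$, is the only delicate part, the remainder being routine and the induction trivial.
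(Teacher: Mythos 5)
Your proposal is correct and follows essentially the same route as the paper: invoke Lemma~\ref{lem:fixed} to identify the restricted global solution with a subproblem solution, apply Property~\ref{property-weak} to the two boundary-data instances, use the fixed global boundary conditions to kill the perturbation terms on subdomains touching $M$ or $N$ and the distance-$\geq\omega$ argument elsewhere, and conclude with the one-step contraction $2\varepsilon_\omega$ and induction. Your explicit monotonization of $\{\varepsilon_i\}$ is a small technical refinement the paper leaves implicit, but it does not change the argument.
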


\begin{proof}
  From Lemma \ref{lem:fixed} we have that the solution of $\mathcal{P}_{M:N}(x^*_M,\lambda^*_N)$ on $\mathbb{I}_{M^\omega_k:N^\omega_k}$ can be obtained from $\mathcal{P}_{M^\omega_k,N^\omega_k}(x^*_{M^\omega_k},\lambda^*_{N^\omega_k})$. For each $k\in\mathbb{I}_{1:K}$, applying Property \ref{property-weak} to $\mathcal{P}_{M^\omega_k,N^\omega_k}(x^*_{M^\omega_k},\lambda^*_{N^\omega_k})$ and $\mathcal{P}_{M^\omega_k,N^\omega_k}(x^{(t)}_{M^\omega_k},\lambda^{(t)}_{N^\omega_k})$ yields
  \begin{align}\label{eqn:pf-1}
    &\Vert z^{(t+1)}_i- z^*_i \Vert_\infty\leq \\
    & \varepsilon_{i-M^\omega_k} \Vert x^{(t)}_{M^\omega_k}-x_{M^\omega_k}^*\Vert_\infty + \varepsilon_{N^\omega_k-i}\Vert\lambda^{(t)}_{N^\omega_k}-\lambda^*_{N^\omega_k}\Vert_\infty\nonumber
  \end{align}
  for any $i\in \mathbb{I}_{M_k:N_k}$ and $t\in\mathbb{Z}_{>0}$. For $i\in \mathbb{I}_{M_1:N_1}$, we have that $x^{(t)}_{1} = x^*_1$ and $i\leq N^\omega_1 - \omega$ yield
  \begin{subequations}\label{eqn:pf-bd}
    \begin{align}\label{eqn:pf-bd-1}
      \Vert z^{(t+1)}_i-z^*_i \Vert_\infty &\leq \varepsilon_\omega \Vert\lambda^{(t)}_{N^\omega_1}-\lambda^*_{N^\omega_1}\Vert_\infty
    \end{align}
    Similarly, for $i\in \mathbb{I}_{M_K:N_K}$, we have that
    \begin{align}\label{eqn:pf-bd-N}
      \Vert z^{(t+1)}_i-z^*_i\Vert_\infty &\leq \varepsilon_\omega \Vert x^{(t)}_{M^\omega_K}-x^*_{M^\omega_K}\Vert_\infty.
    \end{align}
    For $k\in\mathbb{I}_{2:K-1}$, we have $\mathbb{I}_{M_k:N_k}=\mathbb{I}_{M^\omega_k+\omega : N^\omega_k-\omega}$. From \eqref{eqn:pf-1}, we have for any $k\in\mathbb{I}_{2:K-1}$ and $i\in\mathbb{I}_{M_k:N_k}$ that
    \begin{align}\label{eqn:pf-bd-k}
      \left\Vert z^{(t+1)}_i-z^*_i \right\Vert_\infty \leq 2 \varepsilon_{\omega} \left\Vert (x^{(t)}_{M^\omega_k},\lambda^{(t)}_{N^\omega_k})-(x_{M^\omega_k}^*,\lambda^*_{N^\omega_k})\right\Vert_\infty.
    \end{align}
  \end{subequations}
  From \eqref{eqn:pf-bd}, we have that 
  \begin{align}\label{eqn:pf-bd-final}
    \Vert z^{(t+1)}-z^* \Vert_\infty \leq 2 \varepsilon_{\omega} \Vert z^{(t)}_{M:N} -  z^{*}_{M:N} \Vert_\infty
  \end{align}
  Equation \eqref{eqn:pf-bd-final} establishes \eqref{eqn:thm-convergence}.
\end{proof}

Theorem \ref{thm:convergence} indicates that the recursion \eqref{eqn:alg} converges to the solution of the full problem if the size of the overlap $\omega$ is sufficiently large. Furthermore, the convergence rate $2\varepsilon_\omega$ converges asymptotically to zero with the size of the overlap $\omega$ (i.e., with a maximal overlap, the iteration converges in one iteration). This reveals a powerful feature of the proposed scheme: one can {\em control the convergence rate} by choosing the size of the overlap $\omega$. However, as we increase $\omega$, we also increase the complexity of the subproblem and thus a trade-off exists. Accordingly, the selection of a suitable $\omega$ should consider the convergence rate and the subproblem complexity.

We derive termination criteria for the parallel scheme based on the violation of the KKT conditions. We have that \eqref{eqn:kkt-slk} holds at each iteration. We also have that \eqref{eqn:kkt-duf-x}-\eqref{eqn:kkt-prf} hold except for $i\in\{N_1,\cdots,N_{K-1}\}$, which are the boundary indices. For those, the residuals can be evaluated as follows.
\begin{subequations}\label{eqn:res}
  \begin{align}
    \text{residual of \eqref{eqn:kkt-duf-x}} &=(\nabla_{x}f^{(t)}_{N_k})^\top (\lambda^{(t)}_{{N_k}+1} - \hat{\lambda}^{(t,k)}_{{N_k}+1}).\\
    \text{residual of \eqref{eqn:kkt-duf-u}} &=(\nabla_{u}f^{(t)}_{N_k})^\top (\lambda^{(t)}_{{N_k}+1} - \hat{\lambda}^{(t,k)}_{{N_k}+1}).\\
    \text{residual of \eqref{eqn:kkt-prf}}&= x^{(t)}_{N_k+1}-\hat{x}^{(t,k)}_{N_k+1}
  \end{align}
\end{subequations}
Residuals \eqref{eqn:res} can be derived by using the fact that \eqref{eqn:kkt-duf-x}-\eqref{eqn:kkt-prf} hold when $\lambda^{(t+1)}_{i+1}$ and $x^{(t+1)}_{i+1}$ are replaced with $\hat{\lambda}^{(t+1,k)}_{i+1}$ and $\hat{x}^{(t+1,k)}_{i+1}$.
Finally, we define the primal-dual residuals:
\begin{subequations}
  \begin{align*}
    r^{(t)}:=\max_{k\in\mathbb{I}_{1:K-1}}\Vert \hat{x}^{(t,k)}_{N_k+1} - x^{(t)}_{N_k+1} \Vert_\infty\\
    s^{(t)}:=\max_{k\in\mathbb{I}_{1:K-1}}\Vert \hat{\lambda}^{(t,k)}_{N_k+1}- \lambda^{(t)}_{N_k+1}\Vert_\infty,
  \end{align*}
\end{subequations}  
and establish the termination criteria:
\begin{align}
   r^{(t)}<\epsilon_{\text{pr,tol}},\quad s^{(t)}<\epsilon_{\text{du,tol}}. 
\end{align}

\section{Linear Quadratic Problems}\label{sec:LQ}
In this section we show that Property \ref{property-weak} holds for problems with linear and controllable dynamics, convex quadratic objectives, no inequalities, and a single-variable input. Specifically, we make the following assumption. The analysis reveals connections with well-known optimization sensitivity results.

\begin{assumption}\label{ass:2}
  Consider problem \eqref{eqn:ocp} with:
  \begin{enumerate}
  \item $\ell(x,u):= x^\top Q x - f^\top x + r u^2$ with $Q> 0$ and $r > 0$
  \item $f(x,u):= Ax + bu+c$ with $(A,b)$ controllable.
  \item There are no inequality constraints.
  \end{enumerate}
\end{assumption}
Assumption \ref{ass:2} guarantees that Assumption \ref{ass:uniqsol} holds with $X$, $\Lambda=\mathbb{R}^{n_x}$. Now we state the main theorem of this section.

\begin{theorem}\label{thm:lq}
  Property \ref{property-weak} holds for OCPs \eqref{eqn:ocp} satisfying Assumption \ref{ass:2}. 
\end{theorem}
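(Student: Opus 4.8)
The plan is to exploit the fact that, with no inequality constraints and with $Q\succ 0$, $r>0$, problem \eqref{eqn:ocp} is a strictly convex quadratic program whose unique solution coincides with the unique solution of its (linear) KKT system. First I would observe that if $z^*_{M:N}$ and $\tilde z_{M:N}$ solve $\mathcal{P}_{M:N}(x^*_M,\lambda^*_N)$ and $\mathcal{P}_{M:N}(\tilde x_M,\tilde\lambda_N)$, then the differences $\delta x_i := x^*_i-\tilde x_i$, $\delta\lambda_i := \lambda^*_i-\tilde\lambda_i$, $\delta u_i := u^*_i-\tilde u_i$ satisfy the \emph{homogeneous} optimality system obtained from \eqref{eqn:kkt} (the affine data $c$ and $f$ cancel). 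Eliminating $\delta u_i = \tfrac{1}{2r}b^\top\delta\lambda_{i+1}$ leaves a coupled linear recursion in $\zeta_i:=(\delta x_i,\delta\lambda_i)\in\mathbb{R}^{2n_x}$ of the form $\mathcal{E}\,\zeta_{i+1}=\mathcal{A}\,\zeta_i$, with two-point boundary data $\delta x_M$ and $\delta\lambda_N$. The matrix pencil $(\mathcal{E},\mathcal{A})$ is symplectic, so its spectrum is invariant under $\mu\mapsto 1/\mu$.

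The central structural step is to establish an \emph{exponential dichotomy} for this system. Since $(A,b)$ is controllable and $Q\succ 0$ renders $(A,Q^{1/2})$ observable, the symplectic pencil has no eigenvalue on the unit circle; equivalently, the associated discrete algebraic Riccati equation admits both a stabilizing and an anti-stabilizing solution. This furnishes a splitting $\mathbb{R}^{2n_x}=V_s\oplus V_u$ into invariant subspaces of dimension $n_x$ each, together with a one-step transition operator $H$ and constants $C\ge 1$, $\rho\in(0,1)$ that are \emph{independent of $M$ and $N$} because the data are time-invariant, such that the forward flow contracts by $C\rho^{j}$ on $V_s$ and the backward flow contracts by $C\rho^{j}$ on $V_u$. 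I would further use that $V_s$ is the graph of the stabilizing Riccati solution (hence projects isomorphically onto the $\delta x$-coordinates) and, by the symplectic symmetry, $V_u$ is a graph over the $\delta\lambda$-coordinates.

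Next I would solve the boundary-value problem in dichotomy coordinates. Writing the solution as $\zeta_i = H^{\,i-M}w_s + H^{-(N-i)}w_u$ with $w_s\in V_s$, $w_u\in V_u$, the two boundary conditions become a $2n_x\times 2n_x$ linear system for $(w_s,w_u)$ whose diagonal blocks are the invertible projections of $V_s$ onto $\delta x$ and of $V_u$ onto $\delta\lambda$, and whose off-diagonal blocks carry factors $H^{\pm(N-M)}$ of size $O(\rho^{N-M})$. Hence for $N-M$ larger than a fixed threshold the system is invertible with uniformly bounded inverse, giving $\|w_s\|\le C_0(\|\delta x_M\|+\rho^{N-M}\|\delta\lambda_N\|)$ and $\|w_u\|\le C_0(\rho^{N-M}\|\delta x_M\|+\|\delta\lambda_N\|)$. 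Propagating through the dichotomy bounds and using the elementary inequalities $\rho^{(i-M)+(N-M)}\le\rho^{N-i}$ and $\rho^{(N-i)+(N-M)}\le\rho^{i-M}$ (valid for $M\le i\le N$) collapses all cross terms into the target form, yielding $\|\zeta_i\|\le \varepsilon_{i-M}\|\delta x_M\|+\varepsilon_{N-i}\|\delta\lambda_N\|$ with $\varepsilon_j = c\,\rho^{j}$. The remaining short horizons $N-M$ below the threshold are finite in number and, by time-invariance, have uniformly bounded sensitivity, which can be absorbed by enlarging the first few $\varepsilon_j$; since $\rho<1$, $\varepsilon_j\to 0$ and Property \ref{property-weak} follows with geometric decay.

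The step I expect to be the main obstacle is the hyperbolicity/dichotomy argument together with its \emph{uniformity}: one must show not only that the symplectic spectrum avoids the unit circle (connecting controllability and $Q\succ 0$ to the nonexistence of unit-circle modes), but also that the dichotomy constants and the invertibility of the boundary matrix hold with bounds independent of $M$ and $N$. Time-invariance of the data is what makes this uniformity available, and the graph property of $V_s$ and $V_u$ is what makes the two-point boundary problem well posed; handling a possibly singular $A$ cleanly (via the symplectic pencil rather than a plain symplectic matrix) is the remaining technical care required.
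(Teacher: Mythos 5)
Your argument is essentially correct, but it follows a genuinely different route from the paper. The paper never forms the Hamiltonian two-point boundary-value problem: instead it treats \eqref{eqn:ocp} as an equality-constrained QP, builds an explicit banded null-space basis $Z$ of the dynamics Jacobian $G$ out of deadbeat trajectories (which exist by controllability of $(A,b)$), writes the primal--dual solution in closed form through $\overline{H}=(Z^\top HZ)^{-1}$, proves horizon-independent eigenvalue bounds on the banded reduced Hessian $Z^\top HZ$ (Lemma \ref{lem:mineig}), and then invokes a Demko-type result (Proposition \ref{cor:domdec}) on exponential off-diagonal decay of inverses of banded positive definite matrices to obtain $\varepsilon_j=c\rho^{j}$. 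Your route instead passes the difference of the two KKT systems through the symplectic pencil, uses controllability plus observability of $(A,Q^{1/2})$ to exclude unit-circle eigenvalues, and solves the resulting two-point BVP via an exponential dichotomy. What each buys: your dichotomy/Riccati argument is the more standard control-theoretic one and extends naturally to multi-input and (uniformly controllable/observable) time-varying systems, whereas the paper's banded-matrix argument requires no spectral analysis of the pencil, avoids the singular-$A$ issue entirely, and connects to the graph-structured QP machinery of \cite{shin2018decentralized} that the authors use elsewhere. One step you should not wave at: the well-posedness of your boundary system hinges on the claim that $V_u$ projects isomorphically onto the $\delta\lambda$-coordinates (equivalently, invertibility of the anti-stabilizing Riccati solution); it is cleaner to represent both $V_s$ and $V_u$ as graphs over $\delta x$ and reduce invertibility of the boundary matrix to nonsingularity of the gap $P^+-P^-$ between the extreme DARE solutions, which is standard under controllability. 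With that repaired, and with your finite patch for short horizons, the uniformity of $\{\varepsilon_j\}$ over $M$, $N$ follows from time-invariance exactly as you say.
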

\begin{proof}
Without loss of generality we assume that $M=1$. For any $\mathbb{I}_{1:N}$, $x_1^*$, $\lambda^*_N\in\mathbb{R}^{n_x}$, the solution of $\mathcal{P}_{1:N}(x^*_1,\lambda^*_N)$ exists and is unique and can be obtained from
  \begin{align}\label{eqn:zlambda}
  \begin{bmatrix}
    H & G^\top\\
    G  
  \end{bmatrix}
  \begin{bmatrix}
    w\\
    \lambda
  \end{bmatrix}
  =
  \begin{bmatrix}
    \zeta^*\\
    \xi^*
  \end{bmatrix},
\end{align}
where we define
  \begin{align*}
    H&:=
    \begin{bmatrix}
      Q&&&&&\\
      &r&&&&\\
      &&\ddots&&&\\
      &&&Q&\\
      &&&&r\\
      &&&&&0
    \end{bmatrix},
    w :=
    \begin{bmatrix}
      x_1\\
      u_1\\
      \vdots\\
      x_{N-1}\\
      u_{N-1}\\
      x_{N}
    \end{bmatrix},
    \zeta^*:=
    \begin{bmatrix}
      f\\
      0\\
      \vdots\\
      f\\
      0\\
      \lambda^*_N
    \end{bmatrix}\nonumber
    \\
    G&:=
    \begin{bmatrix}
      I\\
      -A&-b&I\\
      &&\ddots&\ddots\\
      &&&-A&-b&I
    \end{bmatrix}
    ,
    \xi^*:=
    \begin{bmatrix}
      x^*_1\\
      0\\
      \vdots\\
      0
    \end{bmatrix}.
  \end{align*}
  
  Since $(A,b)$ is controllable, 
  %% The following part can be commented out in the final version. From:
  the controllability matrix $\mathcal{C}:=\begin{bmatrix}b &Ab &\cdots&A^{n_x-1}b\end{bmatrix}$ has full row rank.
  This implies that $\mathcal{C} u + A^{n_x} b = 0$ has a unique solution.
  Using this fact, we can construct $\overline{x}_1,\cdots,\overline{x}_{n_x}\in\mathbb{R}^{n_x}$ and $\overline{u}_1,\cdots,\overline{u}_{n_x}\in\mathbb{R}$ with: $\overline{x}_{i+1} = A \overline{x}_i + b\overline{u}_i$ for any $i\in\mathbb{I}_{0:n_x}$, $\overline{x}_0 = 0$, $\overline{u}_0=1$, and $\overline{x}_{n_x+1}=0$. We construct $Z\in\mathbb{R}^{(Nn_x+N-1)\times (N-1)}$ as:
\begin{align}\label{eqn:Z}
  Z:=
  \begin{bmatrix}
    \\
    1\\
    \overline{x}_1\\
    \overline{u}_1&1\\
    \vdots&\vdots\\
    \overline{x}_{n_x}&\overline{x}_{n_x-1}&\ddots\\
    \overline{u}_{n_x}&\overline{u}_{n_x-1}&\cdots&1\\
    &\overline{x}_{n_x}&\cdots&\overline{x}_1&\\
    &\overline{u}_{n_x}&\cdots&\overline{u}_1&1\\
    &&\ddots&\vdots&\vdots\\
    &&&\overline{x}_{n_x}&\overline{x}_{n_x-1}&\ddots\\
    &&&\overline{u}_{n_x}&\overline{u}_{n_x-1}&\cdots&1\\
    &&&&\overline{x}_{n_x}&\cdots&\overline{x}_1
  \end{bmatrix}
\end{align}
One can show that $GZ = 0$ and that $Z$ has full column rank by using the lower-triangular structure of $Z$. Observe that $G\in\mathbb{R}^{Nn_x\times(Nn_x+N-1)}$ has full row rank. By the fundamental theorem of linear algebra, the null space of $G$ has dimension $N-1$; consequently, columns of $Z$ span the null space of $G$. Similarly,
%% Can be removed in the final version:
there exists a unique solution of $\mathcal{C}u  +  A^{n_x}e_i = 0$  for any $e_i$ where $e_i$ is the $i$th standard unit vector of $\mathbb{R}^{n_x}$. Using this observation, %% to here.
we construct $X_1,\cdots, X_{n_x}\in\mathbb{R}^{n_x\times n_x}$ and $U_1,\cdots, U_{n_x}\in\mathbb{R}^{1\times n_x}$ with $X_1=I$, $X_{n_x+1}=0$, and $X_{i+1} = AX_i + bU_i$ for any $i\in\mathbb{I}_{1:n_x}$. Now consider
\begin{align}
  Y:=
  \begin{bmatrix}
    I\\
    U_1\\
    X_2&I\\
    \vdots&\vdots\\
    U_{n_x}&U_{n_x-1}&\ddots\\
    &X_{n_x}&\cdots&I\\
    &U_{n_x}&\cdots&U_1&\\
    &&&X_1&I\\
    &&\ddots&\vdots&\vdots\\
    &&&U_{n_x}&U_{n_x-1}&\ddots\\
    &&&&X_{n_x}&\cdots&I
  \end{bmatrix}
\end{align}
and observe that $GY = I$ holds. We now apply a null-space projection to \eqref{eqn:zlambda} using $Z$ and $Y$. This way we obtain the equivalent unconstrained QP
\begin{align}\label{eqn:unconstrainedqp}
  \min_{p\in\mathbb{R}^{N-1}} (Zp+Y\xi^*)^\top H (Z p +Y\xi^*) - (Z^\top \zeta^*)^\top p.
\end{align}
 The solution $p^*$ of \eqref{eqn:unconstrainedqp} is given by
\begin{align}\label{eqn:pstar}
  p^* = \left(Z^\top H Z\right)^{-1} \left(Z^\top \zeta^* + Z^\top H Y \xi^*\right).
\end{align}
We have that $w^*= Zp^*+Y\xi^*$ and $ \lambda^* = -Y^\top H w^* + Y^\top \zeta^*$. Using \eqref{eqn:pstar}, we can write
\begin{subequations}\label{eqn:clsol}
  \begin{align}
    w^* =&  \left(Z\overline{H} Z^\top \right)\zeta^* + \left(Z\overline{H} W^\top \right) \xi^*\\
    \lambda^* =& \left(-W \overline{H} Z^\top + Y^\top \right)\zeta^* + \left(W\overline{H} W^\top\right) \xi^*,
  \end{align}
\end{subequations}
where $\overline{H}:=(Z^\top H Z)^{-1}$ and $W:=Y^\top HZ$. Since $Z$ and $Y$ are independent of the choice of $(x^*_1,\lambda^*_N)$, we obtain the solution of the form \eqref{eqn:clsol} with different boundary conditions $(\tilde{x}_1,\tilde{\lambda}_N)$. Thus, we have that
\begin{subequations}\label{eqn:difference}
\begin{align}
     w^* - \tilde{w} =& \left( Z\overline{H} Z^\top \right )\Delta \zeta + \left( Z\overline{H}W^\top \right)  \Delta \xi\\
     \lambda^* - \tilde{\lambda} =& \left(-W\overline{H} Z^\top + Y^\top \right)\Delta \zeta+ \left(W\overline{H} W^\top \right) \Delta \xi
\end{align}
\end{subequations}
where $\Delta\zeta:=\zeta^*-\tilde{\zeta}$ and $\Delta\xi:=\xi^* - \tilde{\xi}$. 
Considering
\begin{subequations}\label{eqn:bound2}
  \begin{align}
    L_H &:= \left\Vert
    \begin{bmatrix}
      Q&\\
      &r
    \end{bmatrix}
    \right\Vert_\infty
    \\
    L_Z&:=\left\Vert
    \begin{bmatrix}
      \overline{x}_{n_x-1} & \overline{x}_{n_x-2} & \cdots & \overline{x}_1 \\
      \overline{u}_{n_x-1} & \overline{u}_{n_x-2} & \cdots & \overline{u}_1 & 1\\
    \end{bmatrix}
    \right\Vert_\infty
    \\
    L_{Z^\top} &:=\left\Vert
    \begin{bmatrix}
      1 &\overline{x}^\top_1 &\overline{u}_1^\top &\cdots & \overline{x}^\top_{n_x} & \overline{u}^\top_{n_x}
    \end{bmatrix}
    \right\Vert_\infty
    \\
    L_Y&:=
    \left\Vert
    \begin{bmatrix}
      X_{n_x} & \cdots & I\\
      U_{n_x} & \cdots & U_1\\
    \end{bmatrix}
    \right\Vert_\infty\\
    L_{Y^\top}&:=
    \left\Vert
    \begin{bmatrix}
      I& U_1^\top & X_1^\top & \cdots & X_{n_x}^\top & U_{n_x}^\top
    \end{bmatrix}
    \right\Vert_\infty\\
    L_{W} &:=L_{Y^\top}L_H L_Z,\quad L_{W^\top} :=L_{Z^\top}L_H L_{Y^\top}\\
    L&:=\max\{L_{Z},L_{Z^\top},L_{W},L_{W^\top}\}
  \end{align}
\end{subequations}
and we can see that $\Vert Z \Vert_\infty\leq L_Z$, $\Vert Z^\top \Vert_\infty\leq L_{Z^\top}$, $\Vert Y \Vert_\infty\leq L_Y$, $\Vert Y^\top \Vert_\infty\leq L_{Y^\top}$, $\Vert H \Vert_\infty \leq L_H$, $\Vert W\Vert \leq L_W$, and $\Vert W^\top\Vert \leq L_{W^\top}$. Quantities defined in \eqref{eqn:bound2} are all uniform (i.e., does not depend on the length of problem and boundary conditions).

%% To derive Property \ref{property-weak}, we exploit the structure of $Z^\top H Z$.
The bandwidth $\mathcal{B}(\cdot)$ of a matrix is defined as the smallest integer such that $|i-j|\leq \mathcal{B}(\cdot)$ for any $(\cdot)_{i,j}\neq 0$. Note that the bandwidth of $Z^\top HZ$ is not greater than $n_x$. The following is a modification of \cite[Corollary 1]{shin2018decentralized}.
\begin{proposition}\label{cor:domdec}
  Consider positive definite $\Gamma \in\mathbb{R}^{n\times n}$. Suppose that $\lambda(\Gamma)\in[\lambda_{\min},\lambda_{\max}]$ for some $\lambda_{\min},\lambda_{\max}\in\mathbb{R}_{>0}$. Then the following holds.
  \begin{align*}
    \left|(\Gamma^{-1})_{i,j}\right| &\leq \frac{1}{\lambda_{\min}} \left(\frac{\lambda_{\max}-\lambda_{\min}}{\lambda_{\max}+\lambda_{\min}}\right)^{\frac{|i-j|}{\mathcal{B}(\Gamma)}}\;\forall i,j\in\mathbb{I}_{1:n}
  \end{align*}
\end{proposition}

\begin{proof}
  The proof is given in \cite{shin2018decentralized}.
\end{proof}

The following lemma establishes that there exist uniform upper and lower bounds for the eigenvalues of $Z^\top HZ$.

\begin{lemma}\label{lem:mineig}
  Consider \eqref{eqn:ocp}, such that Assumption \ref{ass:2} and let $Z$ be from \eqref{eqn:Z}. Then there exist uniform parameters $\lambda_{1},\lambda_{2}\in\mathbb{R}_{>0}$ (independent of the choice $N$) such the eigenvalues $\lambda$ of $Z^\top H Z$ satisfy $\lambda\in[\lambda_{1},\lambda_{2}]$.
\end{lemma}
\begin{proof}
  The upper bound comes from
  \begin{align*}
    \lambda(Z^\top H Z) &\leq \Vert Z^\top  H Z \Vert_\infty \leq L^2 L_H
  \end{align*}
 and thus we define $\lambda_2:=L^2 L_H$. We can find $\lambda_1$ from the lower bound of $p^\top (Z^\top H Z) p$ for $p\in\mathbb{R}^{N-1}$ with $\Vert p \Vert_2 = 1$. We have that
  \begin{align}\label{eqn:H0}
    &p^\top (Z^\top H Z) p =
    \sum_{i=1}^{N-1}
    p_{i-N+1:i}
    \hat{H}
    p_{i-N+1:i}
  \end{align}
  where $p_i=0$ for $i\leq 0$ for convenience and
    \begin{align*}
      \hat{H}&:=
      \hat{Z}^\top
      \begin{bmatrix}
        Q & 0\\
        0 & r
      \end{bmatrix}\hat{Z},\;
      \hat{Z}:=
      \begin{bmatrix}
        \overline{x}_{n_x} & \cdots & \overline{x}_1 &\\
        \overline{u}_{n_x} & \cdots & \overline{u}_1 & 1
      \end{bmatrix}.
    \end{align*}

    Now we show that $\overline{x}_1,\cdots,\overline{x}_{n_x}$ are linearly independent. To establish a contradiction, suppose that $\overline{x}_1,\cdots,\overline{x}_{n_x}$ are linearly dependent; then there exist not all-zero $\alpha_1,\cdots,\alpha_{n_x}$ such that $\alpha_1 \overline{x}_1 + \cdots + \alpha_{n_x} \overline{x}_{n_x} = 0$ holds. This implies that
  \begin{align*}
    \alpha_{1} \left(b u_0\right) + \cdots +\alpha_{n_x} \left(\sum_{i=0}^{n_x-1} A^{n_x-1-i} b u_i\right)=0.
  \end{align*}
  By rearranging, we obtain
  \begin{align}\label{eqn:ld-2}
    \left(\alpha_{1}u_0 + \cdots + \alpha_{n_x}u_{n_x-1}  \right)b + \cdots +\left(\alpha_{n_x}u_{0}\right) A^{n_x-1}b = 0.
  \end{align}
  Since $u_0=1$ and $\alpha_1,\cdots,\alpha_{n_x}$ are not all-zero, a non-zero coefficient exists in \eqref{eqn:ld-2}, and thus $b,\cdots,A^{n_x-1}b$ are linearly dependent. This contradicts the assumption that $(A,b)$ is controllable.  Consequently, $\overline{x}_1,\cdots,\overline{x}_{n_x}$ are linearly independent.

  From the linear independence of $\overline{x}_1,\cdots,\overline{x}_{n_x}$, one can show that $\hat{Z}$ has full column rank and thus $\hat{H}$ is positive definite. From \eqref{eqn:H0} one can show that
  \begin{subequations}
  \begin{align*}
    p^\top (Z^\top H Z) p
    &\geq
    \sum_{i=1}^{N-1}
    \lambda_{\min}(\hat{H})
    \Vert p_{i-N+1:i}\Vert_2^2\\
    %% &\geq \lambda_{\min}(\hat{H}) \sum_{i=1}^{N-1}\Vert p_i \Vert_2^2 \\
    &\geq \lambda_{\min}(\hat{H}) \Vert p \Vert_2^2 = \lambda_{\min}(\hat{H}).
  \end{align*}
  \end{subequations}
  Observe that $\hat{H}$ is independent of the length of the problem. Thus, we can set uniform parameter $\lambda_{1} := \lambda_{\min}(\hat{H})$.
\end{proof}

By Proposition \ref{cor:domdec} and Lemma \ref{lem:mineig}, we have that
\begin{align}\label{eqn:spmatbound}
  \left\vert\left(\overline{H}\right)_{i,j}\right\vert \leq  \frac{1}{\lambda_{1}}\left(\frac{\lambda_{2}-\lambda_{1}}{\lambda_{2}+\lambda_{1}}\right)^{|i-j|/n_x},\; \forall i,j\in\mathbb{I}_{1:N-1}
\end{align}
where $\lambda_{1}$ and $\lambda_{2}$ are the uniform upper and lower bound of the eigenvalues of $Z^\top H Z$ established in Lemma \ref{lem:mineig}.

Finally, inspecting \eqref{eqn:difference} we observe that $Z$ and $W$ are sparse. In particular, there exists uniform parameter $N_s$ such that the following holds for $|j-i|\geq N_s$.
\begin{align}\label{eqn:sparse}
  (Z)_{\alpha(i),j}=0,\quad (W)_{\beta(i),j}=0  
\end{align}
where $\alpha(i) := \mathbb{I}_{(i-1)(n_x+1)+1:in_x+i-1}$ and $\beta(i) := \mathbb{I}_{(i-1)n_x+1:in_x}$ (these index sets correspond to the index of $x_i$ and $\lambda_i$ among the entries of $w$ and $\lambda$, respectively). Using the sparsity structure identified in \eqref{eqn:sparse}, one can show the following quantities are less than or equal to $L^2\Vert(\overline{H})_{i\pm N_s,j\pm N_s} \Vert_\infty$
\begin{subequations}\label{eqn:bound3}
  \begin{align}
    &\Vert (Z \overline{H}Z^\top)_{\alpha(i),\alpha(j)} \Vert_\infty,
    \Vert (W \overline{H}Z^\top)_{\beta(i),\alpha(j)} \Vert_\infty\\
    &\Vert (Z \overline{H}W^\top)_{\alpha(i),\beta(j)} \Vert_\infty,
    \Vert (W \overline{H} W^\top )_{\beta(i),\beta(j)}\Vert_\infty
  \end{align}
    \end{subequations}
  where we use the syntax $i\pm N_s := \mathbb{I}_{i-N_s:i+N_s}$ and define $(\overline{H})_{i,j}:=0$ if $\{i,j\}\not\subseteq \mathbb{I}_{1:N-1}$ for convenience.
  From \eqref{eqn:difference} and \eqref{eqn:bound3}, we have
  \begin{subequations}
    \begin{align*}
      &\Vert x^*_i- \tilde{x}_i\Vert_\infty \leq L^2 \left\Vert(\overline{H})_{i\pm N_s,N-1\pm N_s}\right\Vert_\infty \Vert \Delta  \lambda_N\nonumber\Vert_\infty\\
      & \qquad+ L^2\left\Vert(\overline{H})_{i\pm N_s,1\pm N_s}\right\Vert_\infty \Vert \Delta x_1 \Vert_\infty\\
  &\Vert \lambda^*_i- \tilde{\lambda}_i\Vert_\infty \leq L^2  \left\Vert(\overline{H})_{i\pm N_s,1\pm N_s}\right\Vert_\infty \Vert\Delta x_1\nonumber\Vert_\infty\\
  &+ \big(L^2 \left\Vert(\overline{H})_{i\pm N_s,N-1\pm N_s}\right\Vert_\infty + \mathbf{1}_{i=N} \big)\Vert \Delta \lambda_N\Vert_\infty.
    \end{align*}
  \end{subequations}
  By \eqref{eqn:spmatbound} and $\mathbf{1}_{i=N}\leq \rho^{N-i-2N_s-1}$, we have that
\begin{subequations}\label{eqn:xlbound}
\begin{align}\label{eqn:xbound}
  &\Vert x^*_i- \tilde{x}_i\Vert_\infty \leq L^2 \frac{2N_s+1}{\lambda_1}\rho^{N-i-2N_s-1} \Vert \Delta \lambda_N\Vert_\infty\nonumber
  \\
  &\quad + L^2 \frac{2N_s+1}{\lambda_1}\rho^{i-2N_s-1} \Vert\Delta x_1\Vert_\infty\\
\label{eqn:lbound}
  &\Vert \lambda^*_i- \tilde{\lambda}_i\Vert_\infty \leq L^2  \frac{2N_s+1}{\lambda_1}\rho^{i-2N_s-1}\Vert \Delta x_1\Vert_\infty \nonumber \\
  & + \big(L^2 \frac{2N_s+1}{\lambda_1}+ 1 \big)\rho^{N-i-2N_s-1}\Vert \Delta \lambda_N\Vert_\infty 
\end{align}
\end{subequations}
for $i\in \mathbb{I}_{M:N}$ and $\rho:=(\lambda_{2}-\lambda_{1})/(\lambda_{2}+\lambda_{1})$. We define
  \begin{align}\label{eqn:epsilon}
    \varepsilon_i &:=\left(L^2 \frac{2N_s+1}{\lambda_1}+1\right) \rho^{i-2N_s-1},\; \forall i\in\mathbb{Z}_{>0}.
  \end{align}
Note that $\{\varepsilon_i\}_{i\in\mathbb{Z}_{\geq 0}}$ is a uniform parameter and \eqref{eqn:xlbound}-\eqref{eqn:epsilon} establish Property \eqref{property-weak}. This concludes the proof.  
\end{proof}

Theorem \ref{thm:lq}  implies that the reduced Hessian is positive definite--a key requirement in optimization sensitivity results as the solution must be locally unique and bounded perturbations yield bounded differences in solutions. We thus expect that Theorem \ref{thm:lq} can be generalized to LQ OCPs with multiple inputs, time-variant objectives and dynamics, and inequality constraints by exploiting algebraic properties. Here, we focus on a simple setting due to space limitations and to keep the presentation clear. For an even more general setting (nonlinear, inequality-constrained) setting, one can seek to validate the sensitivity property numerically by using simulations. We show how to do this in the next section.

\section{Numerical Examples}\label{sec:cstudy}
We use a nonlinear OCP to illustrate that Property \ref{property-weak} guarantees convergence of the decomposition algorithm and to highlight that the approach achieves faster solutions than off-the-shelf solvers. {We consider a nonlinear economic MPC problem for a chemical reactor \cite{faulwasser2018economic,bailey1971cyclic}. The dynamics of the reactor are given by:}
\begin{subequations}\label{eqn:cstr}
  \begin{align}
    \frac{dc_A}{dt} &= 1-10^4(c_A)^2 e^{-\frac{1}{T}} + 400c_A e^{-\frac{0.55}{T}} - c_A\\
    \frac{dc_B}{dt} &= 10^4(c_A)^2 e^{-\frac{1}{T}} - c_B \\
    \frac{dT}{dt} &= u-T
  \end{align}
Here, $c_A$, $c_B$, and $T$ are the concentration of $A$, the concentration of $B$, and the temperature; $x:=(c_A,c_B,T)$ are the state variables; $u$ is the input variable. The objective is a combination of an economic objective (maximizing the production of $B$) and a convex regularization term:
\begin{align}
  \ell(x,u):= -c_B + \rho_{\text{reg}}( u-u_s)^2
\end{align}
where $u_s$ is determined by solving the underlying steady-state optimization problem that minimizes $-c_B$ (see \cite{epfl:faulwasser15h}). The following inequality constraints are enforced on the states and the inputs:
\begin{align}
  c_A,c_B,T\geq 0\quad\text{and}\quad 0.049 \leq u\leq 0.449
\end{align}
\end{subequations}
The control step length is $1$ sec and the differential equations are discretized using an implicit Euler scheme with a step length of $0.25$ sec. We implemented a parallel version of the scheme in {\tt Julia}. Problems were formulated in the modeling language {\tt JuMP} \cite{dunning2017jump}, and were solved with the nonlinear programming solver {\tt IPOPT} \cite{wachter2006implementation}. The scheme was executed on an Intel Xeon CPU E5-2698 v3 processor running at 2.30GHz.

We use numerical simulations to verify that Property \ref{property-weak} holds.
Here, we assess the sensitivity of the primal-dual trajectories against perturbations to the initial state and the terminal cost gradient. The reference problem is formulated with $N=600$ (i.e., $10$ mins) and boundary conditions with $(x^*_1,\lambda^*_N)$. The boundary conditions are perturbed as $(x^*_1,\lambda^*_N)+\delta$, where $\delta$ is sampled from a normal random variable.
It is known that the OCP with \eqref{eqn:cstr} has a periodic optimal solution when $\rho_{\text{reg}}=0$ and a steady-state solution with $\rho_{\text{reg}}=0.5$ \cite{faulwasser2018economic}. The solutions of the reference (unperturbed) problem and 30 samples of perturbed problems are shown in Figure \ref{fig:sens-dis} ($\rho_{\text{reg}}=0.5$) and Figure \ref{fig:sens-non} ($\rho_{\text{reg}}=0$). When $\rho_{\text{reg}}=0.5$ we see that, for both primal and dual solutions, the distances from the reference trajectories are small in the center of the time domain and grow as we approach the boundary. {On the other hand, when $\rho_{\text{reg}}=0$, such convergence is not observed. This indicates that Property \ref{property-weak} holds for $\rho_{\text{reg}}=0.5$ but does not hold for $\rho_{\text{reg}}=0$.} {\cmred This implies that the ADS property strongly depends on the stage cost $\ell$. For high-dimensional systems where it is difficult to graphically assess the ADS property, one may consider assessing the error trajectory based on the norm of deviation from the reference trajectory.}

\begin{figure*}[t!]\centering
  \includegraphics[width=.49\textwidth]{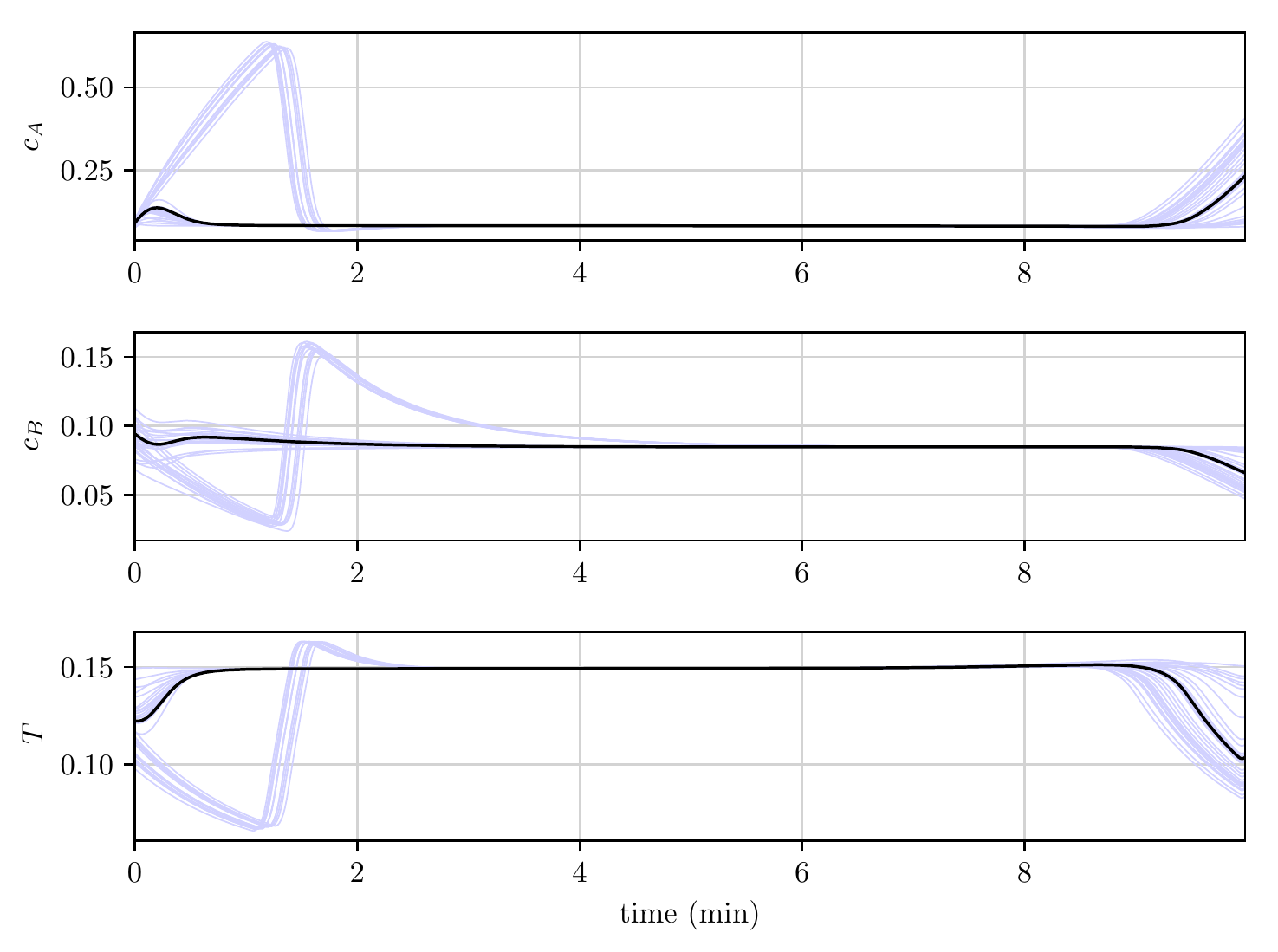}
  \includegraphics[width=.49\textwidth]{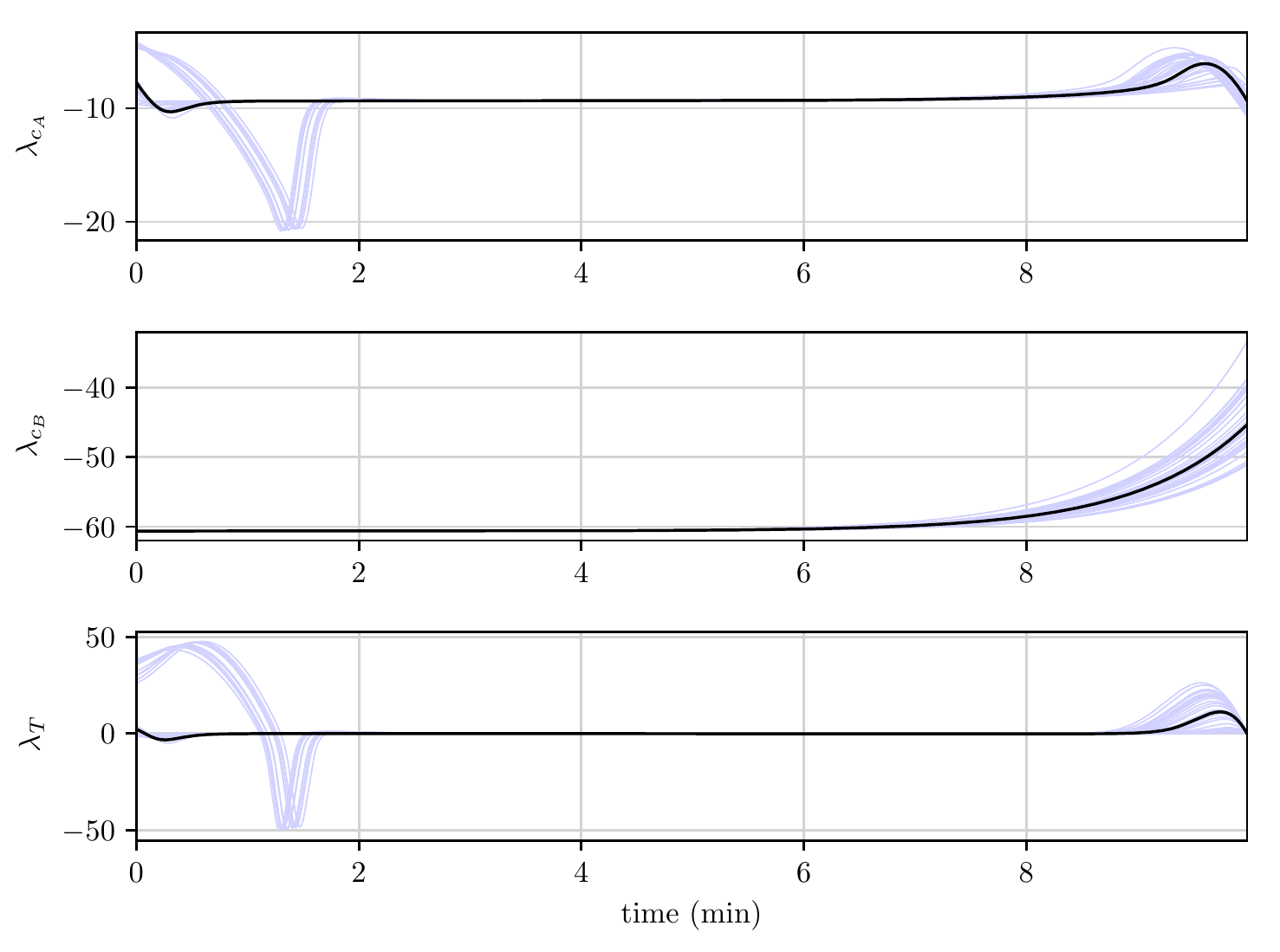}
  \caption{The primal and dual trajectories of the reference problem (black) and 30 perturbed problems (light blue) with $N=180$ and $\rho_{\text{reg}}=0.5$.}\label{fig:sens-dis}
\end{figure*}
\begin{figure*}[t!]\centering
  \includegraphics[width=.49\textwidth]{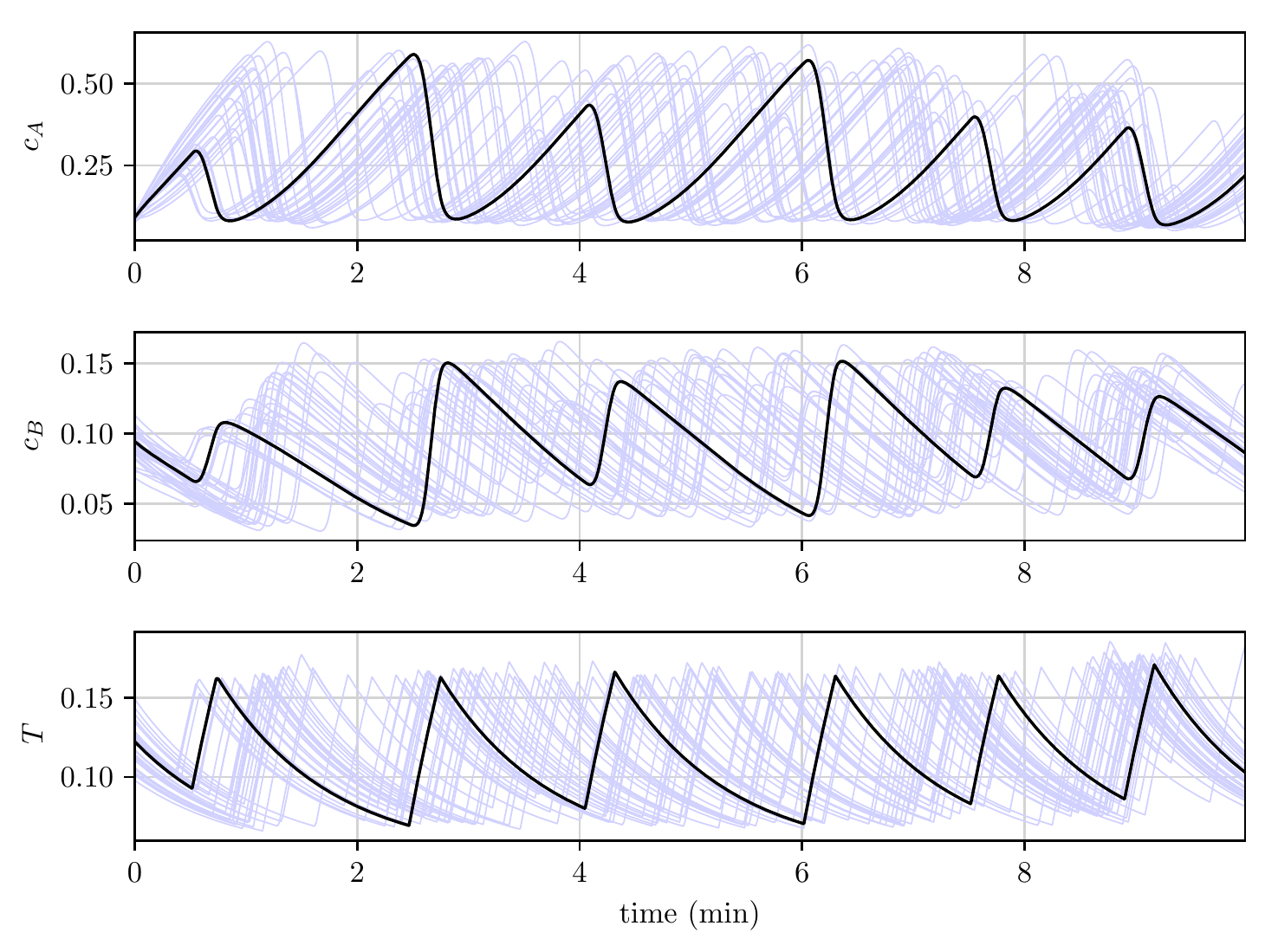}
  \includegraphics[width=.49\textwidth]{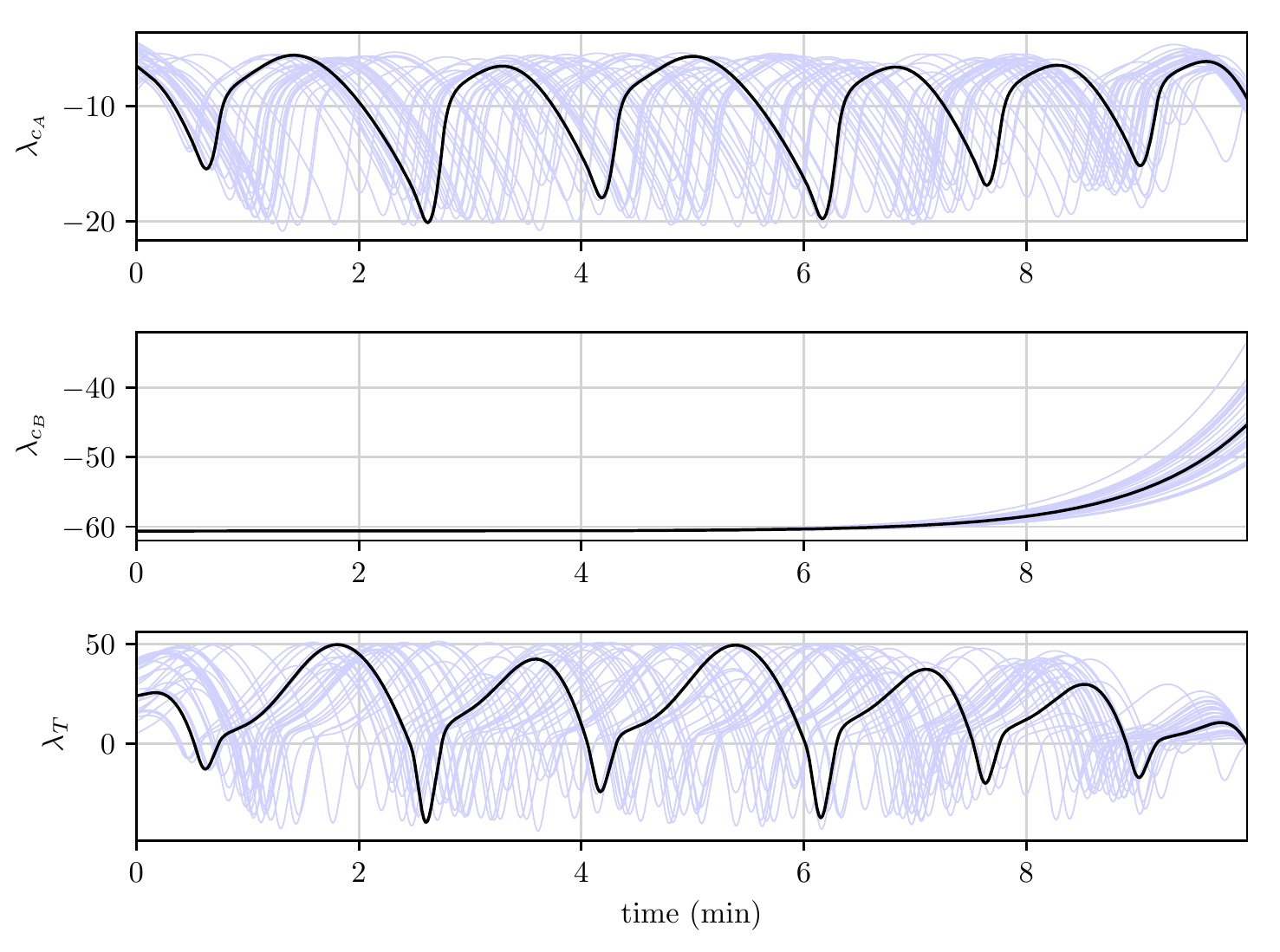}
  \caption{The primal and dual trajectories of the reference problem (black) and 30 perturbed problems (light blue) with $N=180$ and $\rho_{\text{reg}}=0$.}\label{fig:sens-non}
\end{figure*}

We next verify that the decomposition scheme converges and that the convergence rate improves with the size of the overlap. A problem with $N=4,800$ (i.e., $80$ mins) was decomposed into $8$ subdomains and solved with $\omega=2$, $4$, and $8$ and $\rho_{\text{reg}}=0.5$. Figure \ref{fig:convergence} (left) shows the evolution of the residuals and confirms that the rate improves with the overlap. {The algorithm did not converge when $\rho_{\text{reg}}=0$. These observations reinforce the key role of Property \ref{property-weak}.}

We also explore the computational efficiency achieved via parallel decomposition. A problem with $N=4,800$ was solved using {\tt IPOPT}  and we compared the solution time against that of the decomposition scheme with an overlap of $\omega=8$ and $\rho_{\text{reg}}=0.5$. The solution found by the decomposition scheme was equal to the solution from {\tt IPOPT}. Figure \ref{fig:convergence} (right) clearly illustrates that the solution time decreases as the number of computing cores increases (this also increases the number of subdomains). We can see that, if a sufficient number of cores are used, the proposed scheme becomes faster than {\tt IPOPT}. 

\begin{figure*}[t!]\centering
  \includegraphics[width=.49\textwidth]{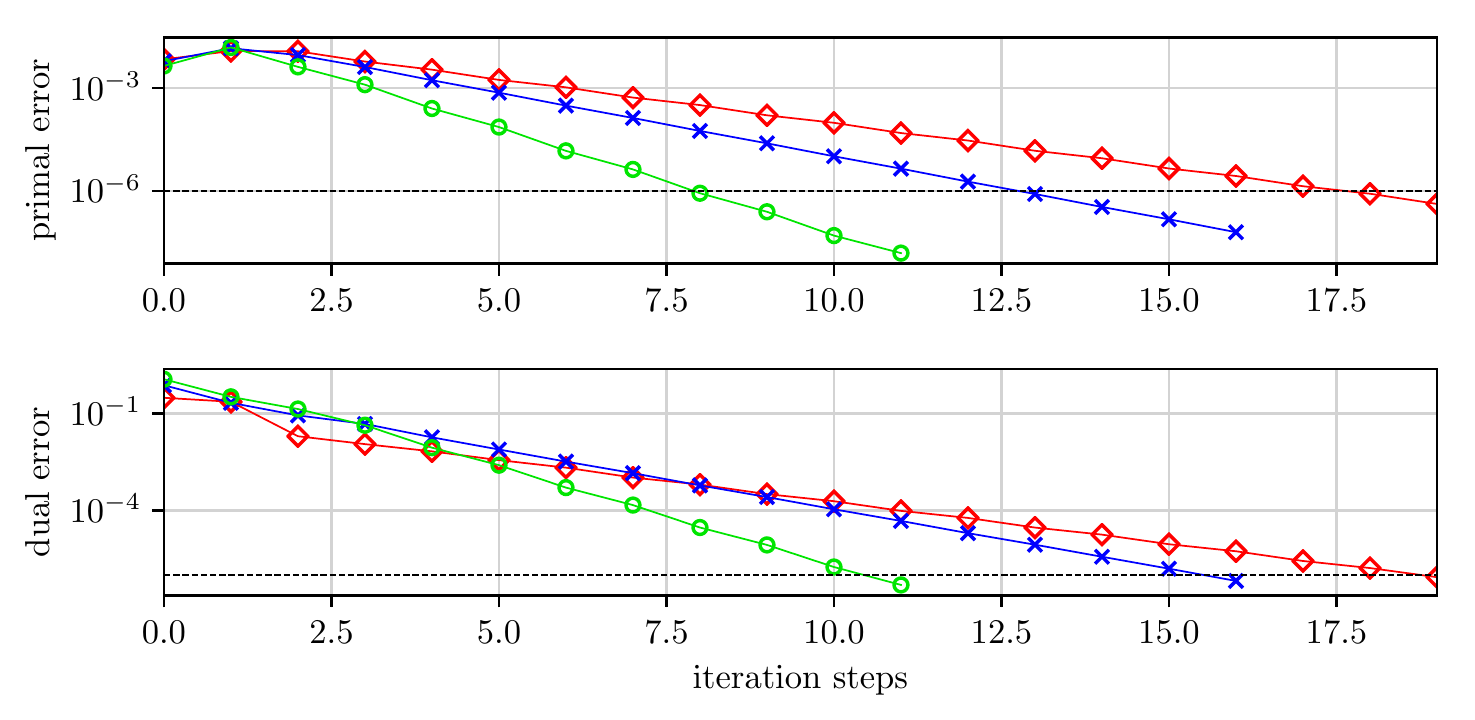}
  \includegraphics[width=.49\textwidth]{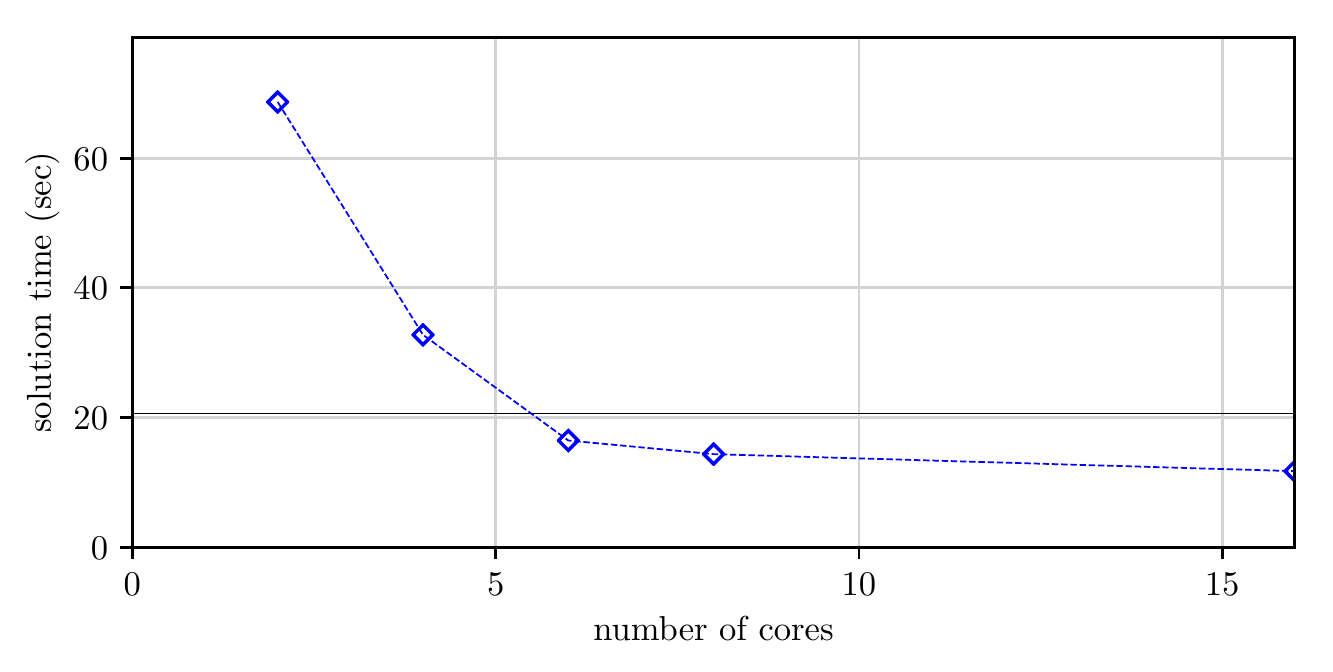}
  \caption{Left: the evolution of the primal and dual residuals with $\omega=2$ (red and diamond), $\omega=4$ (blue and cross), and $\omega=8$ (green and circle) and with $N=4,800$, $K=8$, and $\rho_{\text{reg}}=0.5$. Right: a performance comparison of the centralized solver (black) and the proposed scheme with different number of cores(blue and diamond) and with  $N=4,800$, $\omega=8$, and $\rho_{\text{reg}}=0.5$.}\label{fig:convergence}
\end{figure*}

\section{Conclusions}\label{sec:conclusion}
We have presented a temporal decomposition scheme for long-horizon OCPs that solves subproblems on overlapping time domains. We have proposed an asymptotic sensitivity decay property that guarantees convergence of the algorithm. This property indicates that the sensitivity of the primal and dual trajectories to perturbations on the boundary conditions (initial state and terminal cost gradient) decays asymptotically as one move away from the boundaries. This property also indicates that the scheme converges if the overlap is sufficiently large and the convergence rate improves with the size of the overlap. {\cm We have demonstrated that the solution time of long-horizon OCPs can be improved with the proposed decomposition method.}
\bibliographystyle{ieeetran}
\bibliography{dissipativity}
\end{document}